\begin{document}

\title{Численные методы для некоторых классов относительно сильно монотонных вариационных неравенств}
\titlerunning{Методы для относительно сильно монотонных вариационных неравенств}

\author{ Ф.\,C. Стонякин$^{1,2}$, А.\,А. Титов$^{1,3}$, Д.\,В. Макаренко$^1$, М.\,C. Алкуса$^{1,3}$ 
\thanks{Исследования Ф.С. Стонякина в пyнкте 2, а также исследования А.А. Титова по доказательству леммы 2 и теоремы 2 выполнены при поддержке гранта Российского наyчного фонда, проект 21-71-30005.}}

\authorrunning{Ф.\,C. Стонякин, А.\,А. Титов, Д.\,В. Макаренко, М.\,C. Алкуса}	

\institute{Московский физико-технический институт, Долгопрудный
	\and
	Крымский федеральный университет им. В.И. Вернадского, Симферополь\\
	\and 
	Национальный исследовательский университет «Высшая школа экономики», Москва\\
	\email{fedyor@mail.ru, a.a.titov@phystech.edu, devjiu@gmail.com,
		mohammad.alkousa@phystech.edu}
	}
	
\maketitle
\begin{abstract}
Статья посвящена существенному расширению недавно предложенного класса относительно сильно выпуклых оптимизационных задач в пространствах больших размерностей. В работе вводится аналог понятия относительной сильной выпуклости для вариационных неравенств (относительная сильная монотонность) и исследуются оценки скорости сходимости некоторых  численных методов первого порядка для задач такого типа. В статье рассматриваются два класса вариационных неравенств в зависимости от условий, связанных с гладкостью оператора. Первый из этих классов задач включает в себя относительно ограниченные операторы, а второй --- операторы с аналогом условия Липшица (так называемая относительная гладкость). Для вариационных неравенств с относительно ограниченными и относительно сильно монотонными операторами была исследована вариация субградиентного метода и обоснована оптимальная оценка скорости сходимости. Для задач с относительно гладкими и относительно сильно монотонными операторами доказана линейная скорость сходимости алгоритма со специальной организацией процедуры рестартов (перезапусков) проксимального зеркального метода для вариационных неравенств с монотонными операторами. 

\end{abstract}

\section{Введение и постановка задачи}\label{sec_introduction}

Численные методы градиентного типа достаточно часто используются для самых разнообразных постановок задач выпуклой оптимизации в пространствах больших размерностей. Это объясняется небольшими затратами памяти на итерациях, а также возможностью обоснования приемлемых оценок скорости сходимости, не содержащих (в отличие, например, от методов отсекающей гиперплоскости) параметров размерности пространства. Однако при этом существенны предположения о функциональных свойствах таких задач (гладкость, липшицевость, сильная выпуклость). Так, несколько лет назад был выделен класс относительно гладких задач оптимизации (см., например \cite{Bauschke,Drag,Lu_Nesterov_2018}). Свойство относительной $L$-гладкости ($L > 0$) обобщает ycловие $L$-гладкости ($L$-липшицевости градиента)  $f$ путём замены в справедливом для $L$-гладкий фyнкций $f$ неравенстве ($Q$ --- область определения $f$)
$$
	f(y) \leq f(x) + \langle \nabla{f(x)}, y - x \rangle  + \frac{L}{2} \|x - y \|_2^2 \quad   \forall x, y \in Q
$$	
выражения $\frac{1}{2} \|x - y \|_2^2 $ дивергенцией (расхождением) Брэгмана (см. \eqref{Brg_form} и \eqref{funct_rel_smooth} ниже), которая порождается некоторой выпуклой прокс-функцией (важно, что она не обязательно сильно выпукла). Отметим, что здесь и всюдy далее $\|\cdot\|_2$ --- евклидова норма в $n$-мерном пространстве $\mathbb{R}^n$.

Для выпyклых относительно гладких задач которых оценки сходимости обычных (неускоренных) методов градиентного типа оптимальны с точностью до умножения на константу, не зависящую от размерности и параметров метода (см. работы \cite{Bauschke,Drag,Dragomir,Lu_Nesterov_2018}, а также имеющиеся в них ссылки). В работе \cite{Lu_Nesterov_2018} введено понятие относительной сильной выпуклости функции, которое позволило расширить класс выпуклых оптимизационных задач, для которых можно доказать линейную скорость сходимости (сходимость со скоростью геометрической прогрессии) метода градиентного типа, причём соответствующая оценка не содержит параметров размерности задачи. В данной работе мы развиваем этот подход и исследуем некоторые алгоритмы уже для вариационных неравенств с аналогом относительной сильной выпуклости для операторов (относительной сильной монотонностью). Напомним, что понятие относительной сильной выпуклости \cite{Lu_Nesterov_2018} функции $f$ обобщает понятие обычной $\mu$-сильной выпуклости $f$ ($\mu > 0$) путём замены в неравенстве 
\begin{equation}
	f(x) + \langle \nabla{f(x)}, y - x \rangle  + \frac{\mu}{2} \|x - y \|_2^2 \leq f(y) \quad   \forall x, y \in Q,
	\end{equation}
выражения $\frac{1}{2} \|x - y \|_2^2 $ дивергенцией Брэгмана (см. \eqref{Brg_form} и \eqref{eqrelativestorngconv} ниже), которая порождается некоторой выпуклой прокс-функцией. 

В настоящей статье рассматриваются методы первого порядка для двух классов вариационных неравенств с операторами, удовлетворяющими предлагаемому аналогу условия относительной сильной выпуклости (см. ниже определение  \ref{DefRelStrongMonot} относительной сильной монотонности оператора): с аналогом ограниченности (относительная ограниченность, см. определение 2 ниже), а также с аналогом условия Липшица (относительная гладкость, см. определение 3 ниже).

Хорошо известно, что на классе липшицевых и сильно выпуклых минимизационных задач оптимальная оценка скорости сходимости достигается именно для субградиентного метода \cite{Simon_Julien_Bach_2012}. В последние годы активно исследуются задачи с аналогом условия Липшица относительно некоторой выпуклой прокс-функции (относительная липшицевость), которая, в отличие от классической постановки, не обязана удовлетворять условию сильной выпуклости относительно нормы \cite{AdaMirr_2021,Lu_2018,Zhou_NIPS_2020}. Мы исследуем оценку скорости сходимости субградиентного метода для сильно выпуклых задач с аналогичным предположением об относительной липшицевости. Точнее говоря, в настоящей статье рассматривается вариант субградиентного метода на классе относительно ограниченных и относительно сильно монотонных вариационных неравенств, а также класс относительно сильно выпукло-вогнутых седловых задач с соответствующими условиями относительной липшицевости функционалов. 

Далее, немалую популярность в работах по оптимизации получило упомянутое выше недавно предложенное понятие относительной гладкости функций (см. работы \cite{Bauschke,Drag,Dragomir,Lu_Nesterov_2018}, а также приведённые в них ссылки), которое позволило существенно расширить класс задач выпуклой оптимизации по сравнению со стандартным предположением о липшицевости градиента с гарантией оценки скорости сходимости $O(N^{-1})$ (здесь и далее $N$ --- количество итераций), которая может считаться оптимальной для такого широкого класса задач \cite{Dragomir}. В плане приложений можно отметить подход к построению методов градиентного типа для задач распределенной оптимизации с использованием относительной гладкости и относительной сильной выпуклости \cite{Hendr}. Аналоги относительной гладкости введены в последние пару лет и для более общей постановки задачи решения вариационного неравенства (см. \cite{Inex}, а также имеющиеся там ссылки) с монотонным оператором. Оказывается, что для этого класса задач можно предложить алгоритмы экстраградиентного типа с гарантией оценки скорости сходимости $O(N^{-1})$. Мы же рассматриваем класс относительно сильно монотонных и относительно гладких операторов и показываем, как некоторая вариация адаптивного проксимального зеркального метода \cite{UMP} со специальной организацией процедуры рестартов (перезапусков) может приводить к обоснованию лучшей оценки скорости (линейной скорости сходимости) для вариационных неравенств с такими предположениями. Стоит отметить, что метод адаптивен, т.е. в оценке скорости сходимости глобальный параметр относительной гладкости задачи можно заменить его адаптивно подбираемыми на итерациях потенциально более удобными локальными аппроксимациями. Предлагаемый нами подход, в частности, дал возможность впервые предложить метод с адаптивной настройкой на итерациях параметра относительной гладкости для выделенного в \cite{Hendr} класса задач распределённой оптимизации.

Работа состоит из введения, трех основных частей (пунктов) и заключения. Второй пункт статьи посвящён модификации метода зеркального спуска и выводу оценки его скорости сходимости для вариационных неравенств с относительно сильно монотонными и относительно ограниченными операторами. В частности, полученная оценка указывает на оптимальность такого метода на выделенном классе вариационных неравенств, поскольку она оптимальна (с точностью до умножения на не зависящую от параметров метода и размерности пространства константу) даже на более узком классе задач минимизации относительно липшицевых и относительно сильно выпуклых функций \cite{Lu_2018}. В третьем пункте статьи рассматривается класс относительно сильно монотонных и относительной гладких операторов и анализируется возможность использования рестартованного адаптивного проксимального зеркального метода для такого класса задач с обоснованием гарантии линейной скорости сходимости. В четвертом пункте показывается, как предложенные ранее алгоритмы для вариационных неравенств и полученные теоретические оценки их скорости сходимости могут быть применены для решения относительно сильно выпукло-вогнутых седловых задач с соответствующими предположениями о гладкости функционалов.

Будем рассматривать задачу нахождения решения $x_*$ (также называемого слабым решением) вариационного неравенства: 
\begin{equation}\label{eq:1}
\max_{x \in Q} \langle g(x), x_* - x \rangle \leq 0,
\end{equation}
где $Q$ --- выпуклое замкнутое подмножество $\mathbb{R}^n$,
$g: Q \longrightarrow \mathbb{R}^n$. Предположим, что удовлетворяющее \eqref{eq:1} решение $x_*$ существует.

Всюду далее будем предполагать, что нам доступна некоторая выпуклая (вообще говоря, не сильно выпуклая) дифференцируемая прокс-функция $d$, порождающая расстояние, а также соответствующая ей дивергенция (расхождение) Брэгмана \cite{Bauschke}
\begin{equation}\label{Brg_form}
V(y, x) = d(y) - d(x) - \langle \nabla d(x), y - x \rangle.
\end{equation}

Введём следующий аналог понятия относительной сильной выпуклости функции \cite{Lu_Nesterov_2018} для вариационных неравенств.
\begin{definition}\label{DefRelStrongMonot}
Назовём оператор $g$ относительно $\mu$-сильно монотонным, где $\mu >0$, если для всяких $x, y \in Q$ верно неравенство
	\begin{equation}\label{eq:3}
	 	\mu V(y, x) + \mu V(x, y) \leq \langle g(y) - g(x), y - x \rangle.
	 \end{equation}
\end{definition}
Как правило, далее в статье мы будем использовать следующее неравенство, естественно вытекающее из \eqref{eq:3}.
\begin{remark}
Если оператор $g$ является  относительно $\mu$-сильно монотонным, то для всяких $x, y \in Q$ верно неравенство
$$
	 	\mu V(x, y) \leq \langle g(y) - g(x), y - x \rangle.
$$
\end{remark}

Поясним на примере, почему относительная сильная монотонность вводится име\-нно согласно  \eqref{eq:3}.
\begin{example}
	Если $f: Q \longrightarrow \mathbb{R}$ --- относительно $\mu$-сильно выпуклая функция
	\begin{equation}\label{eqrelativestorngconv}
	f(x) - f(y) + \mu V(x, y) \leq \langle \nabla{f(x)}, x - y \rangle \quad   \forall x, y \in Q,
	\end{equation}
	то
	\begin{equation}
	f(y) - f(x) + \mu V(y, x) \leq \langle \nabla{f(y)}, y - x \rangle \quad   \forall x, y \in Q.
	\end{equation}
	После сложения двух последних неравенств получаем
	\begin{align*}
	\mu V(x, y) + \mu V(y, x)\leq \langle \nabla{f(y)} - \nabla{f(x)}, y - x \rangle \quad  \forall x, y \in Q.
	\end{align*}
	Таким образом, неравенство \eqref{eq:3} верно при $g(x) = \nabla{f(x)}$, где $\nabla{f(x)}$ --- произвольный субградиент $f$.
\end{example}

Относительно сильно выпуклые функционалы возникают в самых разных ситуациях, среди которых мы упомянем задачу централизованной распределённой минимизации эмпирического риска в предположении схожести слагаемых \cite{Hendr}.

 \begin{example}{Минимизация эмпирического риска \cite{Hendr}.}\label{min_risk}\\
	Рассмотрим задачу минимизации эмпирического риска
	\begin{equation}\label{EmpirProbl}
F(x)=\frac{1}{m} \sum_{j=1}^{m} f_{j}(x)=\frac{1}{n m} \sum_{j=1}^{m} \sum_{i=1}^{n} \ell\left(x, z_{i}^{(j)}\right)
\rightarrow\min\limits_{x\in Q},
	\end{equation}
	\begin{equation}
	F(x)=\frac{1}{N}\sum\limits_{i=1}^N \ell(x,z_i),
	\end{equation}
	в предположении, что исходные данные представляют из себя набор $n$ выборок, каждая из которых хранится на одном из $m$ серверов. При этом для достаточно большого $n$ все $f_j$ есть $\mu$-сильно выпуклые и $L$-гладкие (удовлетворяют условию Липшица градиента с константой $L > 0$) функционалы, которые можно считать статистически схожими \cite{Hendr}. Такая схожесть может быть описана в виде предположения \cite{Hendr} о том, что для всякого $x$
	$$
	\|\nabla^2 F(x) - \nabla^2 f_j (x)\|_2 \leq \delta
	$$
	при всяком $j$ для некоторого достаточно малого $\delta >0$, причём здесь и всюдy далее $\|A\|_2 = \max\limits_{\|x\|_2 \leq 1}\|Ax\|_2$. При этом предполагается, что существует центральный сервер (ему соответствует функционал $\overline{f}$), на который передаётся информация о градиентах $f_j$ в текущей точке, но не передаётся информация о значениях $f_j$. В \cite{Hendr} показано, что при таком допущении можно ввести прокс-функцию
	\begin{equation}\label{prox_risk}
	d(x):= \overline{f}(x) + \frac{\delta}{2}\|x\|_2^2 
	\end{equation}
	и для соответствующей дивергенции Брэгмана $V(y, x) = d(y) - d(x) - \langle \nabla d(x), y -x \rangle$ фyнкция $F$ будет относительно $1$-гладкой и относительно $\frac{\mu}{\mu + 2\delta}$-сильно выпуклой, т.е. для всяких $x, y \in Q$ верны неравенства
$$
F(y) \leq F(x) + \langle \nabla F(x), y - x\rangle + V(y, x)
$$
и
$$
F(y) \geq F(x) + \langle \nabla F(x), y - x\rangle + \frac{\mu}{\mu + 2\delta} V(y, x).
$$
	
Это означает, что при $\delta \ll L$ можно улучшить оценку скорости сходимости неускоренного градиентного метода $$O\left(\frac{L}{\mu}\log\frac{1}{\varepsilon}\right)$$
для задач минимизации эмпирического риска (целевой функционал $L$-гладкий и $\mu$-сильно выпуклый) методами первого порядка до $$O\left(\left(1+\frac{\delta}{\mu}\right)\log\frac{1}{\varepsilon}\right).$$
	
Отметим, что при сопоставимых значениях параметров $\delta$ и $\mu$ (этого ввиду малости $\delta$ возможно добиться, к примеру, регуляризацией задачи) такая оценка близка к оценке сложности ускоренных методов
	$$O\left(\sqrt{1+\frac{\delta}{\mu}}\log\frac{1}{\varepsilon}\right).$$
При этом на каждой итерации метода, выполняемого центральным узлом, доступна информация о градиенте целевого функционала $F$, но не доступна информация о значении функционала $F$. Доступность информации о градиенте позволяет рассматривать поставленную задачу минимизации эмпирического риска как задачу отыскания решения вариационного неравенства с относительно гладким и относительно сильно монотонным оператором $g = \nabla F$. Такой подход позволяет, в частности, предложить метод с полной адаптивной настройкой (см. п. \ref{sect_UMP} далее) на параметр гладкости задачи \eqref{EmpirProbl}, чего не удалось добиться для метода в \cite{Hendr}. 
\end{example}

В рамках данной статьи мы рассмотрим численные методы решения вариационных неравенств с операторами, удовлетворяющими условиям относительной ограниченности, а также относительной гладкости.
\begin{definition}\label{DefRelBound}\cite{Main}
Назовём оператор $g: Q \longrightarrow \mathbb{R}^n$ относительно $M$-огранич\-енным, где $M >0$, если для всяких $x, y \in Q$ верно неравенство
	\begin{equation}\label{rel_bound}
	 	\langle g(x), x - y \rangle \leq M\sqrt{2V(y,x)}.
	 \end{equation}
\end{definition}
\begin{definition}\cite{Inex}
Назовём оператор $g: Q \longrightarrow \mathbb{R}^n$ относительно $L$-гладким, где $L > 0$, если для всяких $x, y, z \in Q$ верно неравенство
\begin{equation}\label{rel_smooth}
    \langle g(y)-g(z),x-z\rangle \leq LV(x,z) + LV(z,y).
    \end{equation}
\end{definition}
Отметим, что если функция $f$ $L$-относительно гладкая \cite{Bauschke}, т.е.
\begin{equation}\label{funct_rel_smooth}
f(y) \leq f(x) + \langle \nabla f(x), y - x\rangle + LV(y, x) \quad \forall x, y \in Q,
\end{equation}
то оператор $g(x) = \nabla f(x)$ yдовлетворяет \eqref{rel_smooth}. Однако в слyчае непотенциального оператора $g$ yсловие \eqref{rel_smooth} не сводится, вообще говоря, к \eqref{funct_rel_smooth} для какой-нибyдь фyнкции $f$.

\section{Субградиентный метод для вариационных неравенств с относительно сильно монотонными и ограниченными операторами}\label{section_2}

Вслед за \cite{Simon_Julien_Bach_2012} предложим метод зеркального спуска \eqref{eq:4}, но уже для рассматриваемого в настоящей работе класса  вариационных неравенств с относительно сильно монотонными и относительно ограниченными операторами (определения \ref{DefRelStrongMonot} и \ref{DefRelBound}):

\begin{equation} \label{eq:4}
x_{k+1} := \arg \min_{x \in Q} \left\{ h_k \langle g(x_k), x \rangle + V(x, x_k)\right\},
\end{equation}
где
$$
    h_k = \frac{2}{\mu(k+1)}\quad  \forall k= 0,1, 2, \ldots.
$$

Непосредственно можно проверить следующий вспомогательный результат для шага метода зеркального спуска \eqref{eq:4}.

\begin{lemma}\label{th:base}
Если для $g$ верно \eqref{rel_bound}, а $x_k$ и $x_{k+1}$ удовлетворяют \eqref{eq:4}, то для произвольного $x \in Q$ верно неравенство
$$	
h_k \langle g(x_k), x_k - x \rangle \leq \frac{h_k^2 M^2}{2} + V(x, x_k) - V(x, x_{k+1}).
$$
\end{lemma}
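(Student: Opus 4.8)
The plan is to treat \eqref{eq:4} as a Bregman proximal step and to extract its first-order optimality condition. Writing $\phi(x) = h_k\langle g(x_k), x\rangle + V(x,x_k)$ and noting that $\nabla_x V(x,x_k) = \nabla d(x) - \nabla d(x_k)$ by \eqref{Brg_form}, the minimality of $x_{k+1}$ over the convex set $Q$ gives the variational inequality
$$\langle h_k g(x_k) + \nabla d(x_{k+1}) - \nabla d(x_k),\, x - x_{k+1}\rangle \ge 0 \qquad \forall x \in Q.$$
Rearranged, this reads $h_k\langle g(x_k), x_{k+1} - x\rangle \le \langle \nabla d(x_{k+1}) - \nabla d(x_k),\, x - x_{k+1}\rangle$, so the whole argument reduces to controlling this right-hand side together with the ``missing'' term $h_k\langle g(x_k), x_k - x_{k+1}\rangle$.

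First I would apply the standard three-point identity for the Bregman divergence, which is a direct algebraic consequence of \eqref{Brg_form}:
$$\langle \nabla d(x_{k+1}) - \nabla d(x_k),\, x - x_{k+1}\rangle = V(x, x_k) - V(x, x_{k+1}) - V(x_{k+1}, x_k).$$
Combining it with the optimality inequality yields $h_k\langle g(x_k), x_{k+1} - x\rangle \le V(x,x_k) - V(x,x_{k+1}) - V(x_{k+1},x_k)$, which already produces the telescoping terms $V(x,x_k) - V(x,x_{k+1})$ of the claim, together with a favourable (negative) residual $-V(x_{k+1},x_k)$.

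It then remains to recover the target quantity by splitting $h_k\langle g(x_k), x_k - x\rangle = h_k\langle g(x_k), x_k - x_{k+1}\rangle + h_k\langle g(x_k), x_{k+1} - x\rangle$. The second summand is bounded by the previous step; for the first I would invoke relative boundedness \eqref{rel_bound} applied to the pair $(x_k, x_{k+1})$, giving $h_k\langle g(x_k), x_k - x_{k+1}\rangle \le h_k M \sqrt{2 V(x_{k+1}, x_k)}$. Adding the two bounds leaves the scalar expression $h_k M \sqrt{2 V(x_{k+1}, x_k)} - V(x_{k+1}, x_k)$, which I would dominate by $\tfrac{h_k^2 M^2}{2}$ via completing the square (equivalently, Fenchel--Young), since $h_k M\sqrt{2t} - t = \tfrac{h_k^2M^2}{2} - \bigl(\sqrt{t} - \tfrac{h_kM}{\sqrt2}\bigr)^2 \le \tfrac{h_k^2M^2}{2}$ for $t = V(x_{k+1},x_k) \ge 0$. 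This exactly absorbs the residual and produces the stated inequality.

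The only genuinely delicate point is the orientation of the relative-boundedness estimate: \eqref{rel_bound} must be used at the base point $x_k$, so that the term $\sqrt{2V(x_{k+1},x_k)}$ involves precisely the divergence $V(x_{k+1},x_k)$ that the three-point identity supplies with a minus sign; otherwise the residual would fail to cancel and the clean bound $\tfrac{h_k^2 M^2}{2}$ would not emerge. Everything else is routine algebra.
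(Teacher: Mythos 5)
Your proof is correct and follows essentially the same route as the paper's: first-order optimality of the Bregman proximal step combined with the three-point identity, then the relative-boundedness bound \eqref{rel_bound} applied at the base point $x_k$ to the pair $(x_k,x_{k+1})$, and finally absorbing $h_kM\sqrt{2V(x_{k+1},x_k)}-V(x_{k+1},x_k)$ into $\tfrac{h_k^2M^2}{2}$ by completing the square. The paper compresses these steps into one chain of inequalities, but the decomposition and the key estimates are identical to yours.
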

\begin{proof}
Если применить стандартное условие экстремума 1-го порядка ко вспомогательной минимизационной подзадаче \eqref{eq:4}, то можно проверить при $h_k >0$ для всякого $x\in Q$ справедливость неравенств
$$h_k \langle g(x_k), x_k - x \rangle \leq h_k \langle g(x_k), x_k - x_{k+1} \rangle  + V(x, x_k) - V(x, x_{k+1}) -V(x_{k+1},x_k) \stackrel{\eqref{rel_bound}}{\leq}$$
$$
\leq h_kM\sqrt{2V(x_{k+1},x_k)}+ V(x, x_k) - V(x, x_{k+1}) -V(x_{k+1},x_k) \leq
$$
$$
\leq \frac{h_k^2M^2}{2} + V(x, x_k) - V(x, x_{k+1}).
$$
\end{proof}

Согласно лемме \ref{th:base}, получим, что при всяких $ k \geq 0$ и $x \in Q$ верно
\begin{equation} 
\langle g(x_k), x_k - x \rangle \leq \frac{h_k M^2}{2} + \frac{V(x, x_k)}{h_k} - \frac{V(x, x_{k+1})}{h_k}. 
\end{equation}

Далее, с учетом \eqref{eq:3}, получим 
\begin{equation*}
\langle g(x_k), x_k - x \rangle \geq  \langle g(x), x_k - x \rangle + \mu (V(x, x_k) + V(x_k, x)) \quad \forall x \in Q,
\end{equation*}
откуда при всяком $k \ge 0$ имеем:
\begin{equation}
\begin{aligned} 
2k\langle g(x), x_k - x \rangle +  2k\mu (V(x, x_k) + V(x_k, x)) &\leq  
\frac{2k M^2}{\mu (k+1)} + \mu k (k+1)V(x, x_k) -  \\&
 - \mu k (k+1)V(x, x_{k+1}) \quad \forall x \in Q. 
\end{aligned}
\end{equation}
Это означает, что
\begin{equation}\label{eq:5}
\begin{aligned} 
2k\langle g(x), x_k - x \rangle +  2k\mu V(x_k, x) \leq   
\frac{2k M^2}{\mu (k+1)} &+ \mu k (k-1)V(x, x_k) -  \\& -
 \mu k (k+1)V(x, x_{k+1}) \quad  \forall x \in Q. 
\end{aligned}
\end{equation}
Пусть алгоритм \eqref{eq:4} отработал $N$ шагов. Тогда можно просуммировать неравенства \eqref{eq:5} по $k$ от $1$ до $N$ и учесть, что $\frac{k}{k+1} \le 1$:
\begin{equation}
\sum_{k=1}^{N} 2k(\langle g(x), x_k - x \rangle) + \mu V(x_k, x) \leq \frac{2NM^2}{\mu},
\end{equation}
откуда с учетом $2(1+2+...+N)=N(N+1)$:
\begin{equation} \label{eq:122}
\sum_{k=1}^{N} \frac{2k}{N(N+1)}(\langle g(x), x_k - x \rangle) + \mu V(x_k, x)) 
\leq \frac{2M^2}{\mu(N+1)} \quad \forall x \in Q.
\end{equation}

Если учесть, что $V(x_k, x) \geq 0 \quad \forall x \in Q, \; \forall k \ge 0$,
то при
$$\widehat{x} = \sum_{k=1}^{N} \frac{2 k}{N (N+1)} x_k$$
будет верно неравенство 
\begin{equation} \label{eq:13}
\max_{x \in Q} \langle g(x), \widehat{x} - x \rangle \leq \frac{2 M^2}{\mu (N+1)} \leq \varepsilon,
\end{equation}
после $N = O\left(\frac{M^2}{\mu \varepsilon}\right)$ итераций алгоритма $\eqref{eq:4}$. Как известно, такая оценка сложности оптимальна даже на классе относительно липшицевых и относительно сильно выпуклых минимизационных задач \cite{Lu_2018}. Это указывает на её оптимальность и для существенно более широкого класса относительно липшицевых и относительно сильно выпуклых минимизационных задач, а значит и для рассматриваемого класса вариационных неравенств. 

Таким образом, можно сформулировать следующий результат.
\begin{theorem}\label{thm_MD_VI}
  Пусть $g$ --- относительно $\mu$-сильно монотонный и $M$-относитель\-но ограниченный оператор. Тогда после $N$ итераций алгоритма: 
	$$ x_{k+1} := \arg \min_{x \in Q} \{ h_k \langle g(x_k), x\rangle + V(x, x_k)\}, \;\;\; h_k = \frac{2}{\mu (k+1)}$$
	будет верно неравенство:
	\begin{equation}\label{eq:2}
	\max_{x \in Q} \langle g(x), \widehat{x} - x\rangle \leq \frac{2 M^2}{\mu (N+1)},
	\end{equation}
		где 
	$$
	\widehat{x} = \sum_{k=1}^{N} \frac{2 k}{N (N+1)} x_k.
	$$
\end{theorem}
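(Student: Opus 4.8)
The plan is to chain the one-step estimate of Lemma~\ref{th:base} with relative strong monotonicity \eqref{eq:3} and then engineer a telescoping sum whose weights grow linearly in the iteration index. First I would divide the conclusion of Lemma~\ref{th:base} by $h_k>0$, obtaining for every $x\in Q$
$$
\langle g(x_k),\,x_k-x\rangle \;\le\; \frac{h_k M^2}{2} + \frac{V(x,x_k)}{h_k} - \frac{V(x,x_{k+1})}{h_k}.
$$
Then I would apply Definition~\ref{DefRelStrongMonot} with $y=x_k$ in the form $\langle g(x_k),x_k-x\rangle \ge \langle g(x),x_k-x\rangle + \mu\bigl(V(x,x_k)+V(x_k,x)\bigr)$, which replaces the operator value $g(x_k)$ at the running point by $g(x)$ at the fixed comparison point (this is exactly what is needed to reach \eqref{eq:2}) and simultaneously produces a surplus Bregman term on the left.

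The heart of the argument is the choice $h_k=\tfrac{2}{\mu(k+1)}$ together with multiplication by the weight $2k$. This weighting is tuned precisely so that the surplus $2k\mu V(x,x_k)$ cancels part of the term $\mu k(k+1)V(x,x_k)$ produced by $1/h_k$, lowering its coefficient from $(k+1)$ to $(k-1)$ and yielding exactly \eqref{eq:5}. In \eqref{eq:5} the positive Bregman term carries coefficient $\mu k(k-1)$ while the subtracted one carries $\mu k(k+1)=\mu(k+1)\bigl((k+1)-1\bigr)$ evaluated one index later, so that the negative term at index $k$ annihilates the positive term at index $k+1$: the sequence telescopes.

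Summing \eqref{eq:5} over $k=1,\dots,N$ then collapses the Bregman contributions. The positive term at $k=1$ vanishes because its coefficient is $\mu\cdot1\cdot0=0$, and the only residual boundary term is $-\mu N(N+1)V(x,x_{N+1})\le0$, which I discard. For the noise term I use $\tfrac{k}{k+1}\le1$ to get $\sum_{k=1}^N \tfrac{2kM^2}{\mu(k+1)}\le\tfrac{2NM^2}{\mu}$. Dividing through by $N(N+1)$ and invoking $2(1+\cdots+N)=N(N+1)$ makes the weights $\lambda_k=\tfrac{2k}{N(N+1)}$ sum to one. By linearity of the inner product $\sum_k\lambda_k\langle g(x),x_k-x\rangle=\langle g(x),\widehat{x}-x\rangle$ with $\widehat{x}=\sum_k\lambda_k x_k$; discarding the nonnegative quantities $\lambda_k\mu V(x_k,x)$ and taking the maximum over $x\in Q$ delivers \eqref{eq:2}.

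The main difficulty is algebraic bookkeeping rather than any conceptual hurdle: one must check that the specific step size makes the Bregman coefficients telescope and that the two boundary contributions (at $k=1$ and $k=N$) carry the correct signs. Conceptually, the point is that relative strong monotonicity furnishes exactly the extra $\mu V(x,x_k)$ needed to shift the coefficient from $(k+1)$ down to $(k-1)$; this is precisely the mechanism that produces the $O\bigl(M^2/(\mu N)\bigr)$ rate rather than the slower rate available in the merely relatively monotone setting.
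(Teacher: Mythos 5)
Ваше рассуждение корректно и по существу совпадает с доказательством в статье: тот же одношаговый результат леммы~\ref{th:base}, та же подстановка относительной сильной монотонности с $y=x_k$, то же умножение на вес $2k$ с понижением коэффициента при $V(x,x_k)$ с $(k+1)$ до $(k-1)$, телескопирование, оценка $\tfrac{k}{k+1}\le 1$ и нормировка весов $\tfrac{2k}{N(N+1)}$. Отличий от авторского вывода неравенств \eqref{eq:5}--\eqref{eq:13} нет.
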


\begin{remark}
Если $x_*$ --- сильное решение рассматриваемого вариационного не\-равенства, то можно выписать оценку скорости сходимости и <<по аргументу>>, поскольку $\langle g(x_*), x_k - x_*\rangle \geq 0$. Тогда \eqref{eq:122} означает, что 
	\begin{equation} \label{eq:12}
	\begin{aligned} 
	\sum_{k=1}^{N} \frac{2k\mu V(x_k, x_*)}{N(N+1)} \leq \frac{2M^2}{\mu(N+1)} \quad  \forall x \in Q,
	\end{aligned}
	\end{equation}
Если же прокс-функция $1$-сильно выпукла относительно нормы $\|\cdot\|$, то из \eqref{eq:12} вытекает следующая оценка:
	\begin{equation} 
	\begin{aligned} 
	\|x_* - x_k\|^2 \leq \frac{4M^2}{\mu(N+1)}.
	\end{aligned}
	\end{equation}
\end{remark}
\begin{remark}
Если прокс-функция $d$ является $1$-сильно выпуклой, то неравенство из леммы 1 можно уточнить:
\begin{equation} 
h_k \langle g(x_k), x_k - x \rangle \leq \frac{h_k^2 \|g(x_k)\|_*^2}{2} + V(x, x_k) - V(x, x_{k+1}). 
\end{equation}
Тогда итоговая оценка \eqref{eq:13} приобретает следующий вид:
\begin{equation}
\max_{x \in Q} \langle g(x), \widehat{x} - x \rangle \leq \frac{2}{\mu N (N+1)} \sum_{k=1}^{N} \frac{k \|g(x_k)\|_*^2}{k+1} \leq \varepsilon.
\end{equation}
Правая часть предыдущего неравенства может оказаться существенно меньшей, чем для оценки \eqref{eq:2}.
\end{remark}

\section{Адаптивный проксимальный зеркальный метод для вариационных неравенств с относительно сильно монотонными и относительно гладкими операторами} \label{sect_UMP}

Настоящий пункт посвящён методике решения вариационных неравенств с относительно гладкими и относительно сильно монотонными операторами. Предлагаемый подход основан на процедуре рестартов (перезапусков) адаптивного варианта проксимального зеркального метода (алгоритм \ref{Alg:UMP}), недавно предложенного в \cite{UMP}. В (\cite{Inex}, раздел 4) замечено, что этот алгоритм применим к вариационным неравенствам с монотонными и относительно гладкими операторами с сохранением оценки $O(N^{-1})$ скорости роста качества выдаваемого решения по мере роста количества итераций $N$. Мы же покажем, как на базе условия относительной сильной монотонности (определение \ref{DefRelStrongMonot}) можно ввести процедуру рестартов (алгоритм \ref{Alg:RUMP}) алгоритма \ref{Alg:UMP}, которая позволит обосновать линейную скорость сходимости.

\begin{algorithm}[htp]
\caption{Адаптивный проксимальный зеркальный метод для вариационных неравенств.}
\label{Alg:UMP}
\begin{algorithmic}[1]
   \REQUIRE $\varepsilon > 0$, $x_0 \in Q$, $L_0 >0$, $d(x)$, $V(x,z)$.
   \STATE  $k=0$, $z_0 = \arg\min\limits_{u \in Q} d(u)$.
   \STATE  $k:= k+1$.
			\STATE Найти наименьшее $i_k\geq 0:$ 
			\begin{equation}\label{eq_lem_3}
			\begin{multlined}
			    \langle g(z_k) - g(w_k),z_{k+1}-w_k\rangle \leq  L_{k+1}(V(w_k,z_k) + V(z_{k+1},w_k)),
			    \end{multlined}
			\end{equation}
	где $L_{k+1} = 2^{i_k-1}L_k$, причём 
    			\begin{equation}\label{eq_sp_1}
    			     w_k = \arg\min\limits_{x \in Q}\{\langle g(z_k),x-z_k\rangle + L_{k+1}V(x,z_k)\},
    			\end{equation}
    				\begin{equation}\label{alg1_eq2}
    			     z_{k+1} = \arg\min\limits_{x \in Q}\{\langle g(w_k),x-w_k\rangle + L_{k+1}V(x,z_k)\}.
    			\end{equation}
\STATE Переход к п. 2.
\ENSURE $z_k$.
\end{algorithmic}
\end{algorithm}

Рассмотрим вспомогательное утверждение о поведении генерируемой алгоритмом \ref{Alg:UMP} итеративной последовательности, часть доказательства которого (неравенство \eqref{equat3.7}) базируется на (\cite{Inex}, теорема 4.8). При этом, в отличие от \cite{UMP,Inex}, мы предполагаем существование точного решения $x_*$ вариационного неравенства \eqref{eq:1} на множестве $Q$ и оцениваем <<расcтояние>> от выдаваемой методом элементов последовательности $z_N$ до ближайшего к точке старта решения $x_*$ вариационного неравенства \eqref{eq:1}.

\begin{lemma}\label{bregmanlemma}
Пусть  $g$ --- монотонный и относительно гладкий оператор. Тогда для алгоритма \ref{Alg:UMP} верно неравенство
\begin{equation}\label{eqthm1}
V(x_*,z_{N})\leq V(x_*,z_{0}),
\end{equation}
где $x_*$ --- ближайшее к $z_0$ точное решение вариационного неравенства \eqref{eq:1}.
\end{lemma}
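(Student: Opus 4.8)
The plan is to reduce the whole claim to a single per-iteration inequality,
$$\langle g(w_k), w_k - x\rangle \leq L_{k+1}\bigl(V(x, z_k) - V(x, z_{k+1})\bigr) \qquad \forall x \in Q,$$
and then to specialize it to $x = x_*$. The entire derivation rests on two elementary tools: the first-order optimality conditions for the two proximal subproblems \eqref{eq_sp_1} and \eqref{alg1_eq2}, and the three-point identity for the Bregman divergence,
$$\langle \nabla d(z_k) - \nabla d(z_{k+1}), x - z_{k+1}\rangle = V(x, z_{k+1}) - V(x, z_k) + V(z_{k+1}, z_k).$$
This central inequality is exactly the bound based on (\cite{Inex}, Theorem 4.8) alluded to before the statement, and notably its proof uses only the construction of Algorithm \ref{Alg:UMP}, not monotonicity.

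First I would write the optimality condition for $z_{k+1}$ in \eqref{alg1_eq2}: for every $x \in Q$,
$$\langle g(w_k), z_{k+1} - x\rangle \leq L_{k+1}\bigl(V(x, z_k) - V(x, z_{k+1}) - V(z_{k+1}, z_k)\bigr),$$
and analogously the condition for $w_k$ in \eqref{eq_sp_1}, evaluated at the point $z_{k+1}$,
$$\langle g(z_k), w_k - z_{k+1}\rangle \leq L_{k+1}\bigl(V(z_{k+1}, z_k) - V(z_{k+1}, w_k) - V(w_k, z_k)\bigr).$$
Then I would split $\langle g(w_k), w_k - x\rangle = \langle g(w_k), w_k - z_{k+1}\rangle + \langle g(w_k), z_{k+1} - x\rangle$, rewrite the first summand as $\langle g(z_k) - g(w_k), z_{k+1} - w_k\rangle + \langle g(z_k), w_k - z_{k+1}\rangle$, and bound the first of these by the adaptive stopping criterion \eqref{eq_lem_3} and the second by the optimality of $w_k$ above. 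The key point is that the terms $L_{k+1}V(w_k, z_k)$ and $L_{k+1}V(z_{k+1}, w_k)$ produced by \eqref{eq_lem_3} cancel exactly against those coming from the optimality of $w_k$, leaving $\langle g(w_k), w_k - z_{k+1}\rangle \leq L_{k+1}V(z_{k+1}, z_k)$; adding the bound for $\langle g(w_k), z_{k+1} - x\rangle$ then cancels the $V(z_{k+1}, z_k)$ terms as well, producing the target inequality.

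To finish, I would set $x = x_*$. Since $x_*$ is a weak solution of \eqref{eq:1}, we have $\langle g(w_k), x_* - w_k\rangle \leq 0$, i.e. $\langle g(w_k), w_k - x_*\rangle \geq 0$; as $L_{k+1} > 0$, this forces $V(x_*, z_{k+1}) \leq V(x_*, z_k)$ at every step, so the sequence $k \mapsto V(x_*, z_k)$ is nonincreasing and $V(x_*, z_N) \leq V(x_*, z_0)$. I would also note that monotonicity together with relative $L$-smoothness \eqref{rel_smooth} is what guarantees the method is well defined: applying \eqref{rel_smooth} with $y = z_k$, $z = w_k$, $x = z_{k+1}$ shows that \eqref{eq_lem_3} already holds as soon as $L_{k+1} \geq L$, so the inner search over $i_k$ terminates.

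The step I expect to be most delicate is the bookkeeping of the Bregman cross terms: one must track the signs in the three-point identity precisely and verify that $V(w_k, z_k)$, $V(z_{k+1}, w_k)$ and $V(z_{k+1}, z_k)$ cancel in exactly the right way when \eqref{eq_lem_3} is substituted. The nonnegativity $V \geq 0$ is not needed in this cancellation, but it is worth double-checking that no spurious residual Bregman term survives, since any sign slip there would destroy the clean contraction $V(x_*, z_{k+1}) \leq V(x_*, z_k)$ on which the conclusion depends.
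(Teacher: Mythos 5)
Your proposal is correct and follows essentially the same route as the paper: the same two optimality conditions, the same cancellation of $V(w_k,z_k)$, $V(z_{k+1},w_k)$ and $V(z_{k+1},z_k)$ via the stopping rule \eqref{eq_lem_3}, yielding the key per-iteration bound \eqref{equat3.7}, and then telescoping. The only (harmless) difference is in the last step: you invoke the weak-solution definition \eqref{eq:1} directly at $x=w_k$ to get $\langle g(w_k), w_k-x_*\rangle\geq 0$, whereas the paper passes through the strong-solution property $\langle g(x_*),w_k-x_*\rangle\geq 0$ and then applies monotonicity --- your version is slightly more direct and equally valid.
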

\begin{proof}
Нетрудно показать, что в силу \eqref{eq_sp_1} и \eqref{alg1_eq2} для $(k+1)$-ой итерации алгоритма \ref{Alg:UMP}, для всякого $ u\in Q$ имеют место следующие неравенства:
\begin{equation}\label{eq_lem_1}
    \langle g(z_k),w_k-z_k \rangle\leq \langle g(z_k),u-z_k \rangle +L_{k+1}V(u,z_k) - L_{k+1}V(u,w_k)-L_{k+1}V(w_k,z_k),
\end{equation}
\begin{equation}\label{eq_lem_2}
\begin{aligned}
\langle g(w_k),z_{k+1}-w_k \rangle\leq \langle g(w_k),u-w_k \rangle &+L_{k+1}V(u,z_k) - L_{k+1}V(u,z_{k+1}) -\\&
\quad \quad \quad \quad\quad\quad \quad\quad\quad \quad \; -L_{k+1}V(z_{k+1},z_k).
\end{aligned}
\end{equation}

Используя неравенство \eqref{eq_lem_1} при $u=z_{k+1}$, а также \eqref{eq_lem_3} и \eqref{eq_lem_2}, получаем, что 
\begin{equation}\label{equat3.7}
    -\langle g(w_k),u-w_k \rangle\leq L_{k+1}V(u,z_k)-L_{k+1}V(u,z_{k+1}) \quad \forall k\geq 0.
\end{equation}
В силу относительной сильной выпуклости оператора $g$, слабое решение вариационного неравенства также является сильным в следующем смысле:
\begin{equation}
-\langle g(x_*),w_k-x_*\rangle\leq 0,
\end{equation}
что приводит при $u = x_*$ к неравенствам
\begin{equation}
0\leq -\langle g(x_*),x_*-w_k\rangle \leq-\langle g(w_k),x_*-w_k \rangle \leq L_{k+1}V(x_*,z_k)-L_{k+1}V(x_*,z_{k+1}).
\end{equation}

Таким образом, имеют место соотношения
\begin{equation}\label{eqbregmanzzN}
V(x_*,z_{k+1})\leq V(x_*,z_{k}) \quad \forall k \geqslant 0,
\end{equation}
откуда и следует \eqref{eqthm1}.
\end{proof}

Теперь покажем, как соотношения \eqref{eqbregmanzzN}, а также относительная сильная монотонность оператора $g$ (определение \ref{DefRelStrongMonot}) позволяет предложить процедуру рестартов (алгоритм \ref{Alg:RUMP}) алгоритма \ref{Alg:UMP}, гарантирующую достижение заданного качества приближённого решения за линейное время. Отметим, что идеология организации за счёт предположения сильной выпуклости задачи процедуры рестартов (перезапусков) оптимального метода для выпуклых задач оптимизации и вывода лучших оценок скорости сходимости уже на классе сильно выпуклых задач возникла ещё в 1980-х годах (см. например \cite{Nem_Nest_1985}) и нередко используется. Однако новизна предлагаемого подхода заключается в его использовании для относительно сильно монотонных вариационных неравенств в сочетании с адаптивным правилом перезапуска алгоритма \ref{Alg:UMP} (см. пункт 3 листинга алгоритма \ref{Alg:RUMP}).

\begin{algorithm}[htp]
	\caption{Рестарты адаптивного проксимального зеркального метода: метод для относительно гладких и сильно монотонных ВН.}
	\label{Alg:RUMP}
	\begin{algorithmic}[1]
		\REQUIRE $\varepsilon > 0$, $\mu >0$, $\Omega$ : $d(x) \leq \frac{\Omega}{2} \ \forall x\in Q: \|x\| \leq 1$; $x_0,\; R_0 \ : V(x_*,x_0) \leq R_0^2.$
		\STATE $p=0,d_0(x)=R_0^2d\left(\frac{x-x_0}{R_0}\right)$.
		\REPEAT
		\STATE $x_{p+1}$ --- результат работы алгоритма 1 с прокс-функцией $d_{p}(\cdot)$ и критерием остановки $S_N :=\sum_{i=0}^{N-1}L_{i+1}^{-1}\geq \frac{\Omega}{\mu}$.
		\STATE $R_{p+1}^2 = \frac{\Omega R_0^2}{2^{(p+1)}\mu S_{N_p}}$.
		\STATE $d_{p+1}(x) \leftarrow R_{p+1}^2d\left(\frac{x-x_{p+1}}{R_{p+1}}\right)$.
		\STATE $p=p+1$.
		\UNTIL			
		$p > \log_2\left(\frac{2R_0^2}{\varepsilon}\right).$	
		\ENSURE $x_p$.
	\end{algorithmic}
\end{algorithm}

\begin{remark}
Сделаем комментарий о работе алгоритма 2. Для заданной начальной точки  $x_0$, а также $R_0$ и $d_0(x)$ применяется алгоритм 1 с критерием остановки п.3 листинга алгоритма 2. После не более, чем $N_0 = \left \lceil\frac{2L\Omega}{\mu} \right\rceil$ итераций алгоритма 1 мы фиксируем выходную точку $x_1$, обновляем $R_1$ и $d_1(x)$ и производим процедуру рестартов (перезапусков) алгоритма 1 с начальной точки $x_1$. Описанная процедура повторяется до тех пор, пока не будет выполнено количество рестартов алгоритма 1 согласно  п. 7 листинга алгоритма 2.
\end{remark}

Следующая теорема характеризует оценку сложности алгоритма 2.

\begin{theorem}\label{th_restarts}
Пусть оператор $g$ является относительно $\mu$-сильно монотонным и относительно $L$-гладким при $L > 0$ и $\mu > 0$. Тогда для точки выхода $x_p$ алгоритма \ref{Alg:RUMP} имеет место следующая оценка
\begin{equation}\label{equatsummary}
V(x_*,x_p)\leq \varepsilon.
\end{equation}
При этом общее количество итераций алгоритма 1 не превосходит $$N = \left\lceil \frac{2L\Omega}{\mu}\log_2\frac{R_0^2}{\varepsilon} \right\rceil.$$
\end{theorem}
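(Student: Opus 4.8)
The plan is to run an outer induction on the restart index $p$, carrying the invariant $V(x_*,x_p)\le R_p^2$ together with the geometric decay $R_p^2\le R_0^2/2^p$; the base case is exactly the input condition $V(x_*,x_0)\le R_0^2$. Fix one restart. Because step~3 calls Algorithm~\ref{Alg:UMP} with the rescaled prox-function $d_p(x)=R_p^2 d\left(\frac{x-x_p}{R_p}\right)$, the governing divergence inside the call is $V_p(y,x)=R_p^2 V\left(\frac{y-x_p}{R_p},\frac{x-x_p}{R_p}\right)$ and the start point is $z_0=\arg\min_u d_p(u)=x_p$. From this I would extract two facts: (i) $V_p(x_*,z_0)=R_p^2 d\left(\frac{x_*-x_p}{R_p}\right)\le\frac{\Omega}{2}R_p^2$, obtained from the normalization defining $\Omega$ once the invariant is translated into the norm bound $\|x_*-x_p\|\le R_p$; and (ii) every inner trial constant satisfies $L_{k+1}\le 2L$, since relative $L$-smoothness makes the exit test \eqref{eq_lem_3} pass already at $L_{k+1}=L$ and the doubling rule overshoots by at most a factor $2$.

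The core is a per-restart contraction. Repeating the proof of Lemma~\ref{bregmanlemma} with $V_p$ in place of $V$ produces, for each inner index $k$, the analogue of \eqref{equat3.7}, namely $-\langle g(w_k),x_*-w_k\rangle\le L_{k+1}\left(V_p(x_*,z_k)-V_p(x_*,z_{k+1})\right)$. Dividing by $L_{k+1}$ and telescoping over the inner loop, and discarding the nonnegative tail $V_p(x_*,z_{N_p})$, I get $\sum_k L_{k+1}^{-1}\langle g(w_k),w_k-x_*\rangle\le V_p(x_*,z_0)$. I would then invoke relative $\mu$-strong monotonicity \eqref{eq:3} together with the solution property $\langle g(x_*),w_k-x_*\rangle\ge 0$ from \eqref{eq:1} to bound each summand below by $\mu\left(V(w_k,x_*)+V(x_*,w_k)\right)$. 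Combining this with fact~(i), with the stopping rule $S_{N_p}=\sum_k L_{k+1}^{-1}\ge\Omega/\mu$, and with a $L_{k+1}^{-1}$-weighted average of the $w_k$ as output $x_{p+1}$ (so that Jensen applies to the convex map $V(\cdot,x_*)$), I obtain $V(x_*,x_{p+1})\le\frac{V_p(x_*,z_0)}{\mu S_{N_p}}\le\frac{\Omega R_p^2}{2\mu S_{N_p}}$. Substituting $R_p^2\le R_0^2/2^p$ identifies the right-hand side with $R_{p+1}^2=\frac{\Omega R_0^2}{2^{p+1}\mu S_{N_p}}$ from step~4 of Algorithm~\ref{Alg:RUMP}, which both closes the invariant and, via $S_{N_p}\ge\Omega/\mu$, forces $R_{p+1}^2\le R_0^2/2^{p+1}$.

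The conclusion \eqref{equatsummary} is then immediate: when the outer loop stops we have $p>\log_2(2R_0^2/\varepsilon)$, hence $V(x_*,x_p)\le R_p^2\le R_0^2/2^p<\varepsilon$. For the iteration count I would use fact~(ii): each inner step raises $S_N$ by $L_{k+1}^{-1}\ge 1/(2L)$, so the threshold $\Omega/\mu$ is crossed within $N_p\le\lceil 2L\Omega/\mu\rceil$ inner steps; multiplying by the number of restarts, which is $\approx\log_2(R_0^2/\varepsilon)$, yields the claimed total $N=\lceil\frac{2L\Omega}{\mu}\log_2\frac{R_0^2}{\varepsilon}\rceil$.

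The step I expect to fight hardest is the \emph{asymmetry} of the Bregman divergence. The telescoping in Lemma~\ref{bregmanlemma} delivers $V_p(x_*,\cdot)$ with the target in the first slot, whereas Jensen's inequality is available only for $V(\cdot,x_*)$ with the target in the second slot; relative strong monotonicity \eqref{eq:3} is written with \emph{both} $V(y,x)$ and $V(x,y)$ precisely so that whichever direction is needed can be supplied, and I must keep the chosen orientation consistent through the inner telescoping, the averaging, and the outer invariant. Two secondary points need care: turning $V(x_*,x_p)\le R_p^2$ into $\|x_*-x_p\|\le R_p$ in fact~(i) relies on the relation between $d$ and the norm and is delicate since $d$ need not be strongly convex; and the bound $L_{k+1}\le 2L$ in fact~(ii) is transparent only when $V_p\equiv V$ (for instance for a quadratic $d$), so for a general prox-function one must verify that $V_p$ inherits relative $L$-smoothness with the same constant.
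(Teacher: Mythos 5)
Your overall strategy coincides with the paper's: induction over restarts with the invariant $V(x_*,x_p)\le R_p^2$ and $R_p^2\le R_0^2/2^p$, the telescoped inequality \eqref{equat3.7} divided by $L_{k+1}$ and summed over the inner loop, the lower bound $-\langle g(w_k),x_*-w_k\rangle\ge\mu V(x_*,w_k)$ from relative strong monotonicity combined with the (weak~$\Rightarrow$~strong) solution property, and the stopping rule $S_{N_p}\ge\Omega/\mu$ to force the halving; the iteration count via $L_{k+1}\le 2L$ is likewise the intended (if unstated) argument behind the paper's $N_0=\lceil 2L\Omega/\mu\rceil$. The one place you genuinely diverge is the extraction of a single-point bound from the weighted-sum bound, and it is exactly the place where your route does not close as written. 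You propose to output the $L_{k+1}^{-1}$-weighted average of the $w_k$ and apply Jensen; but, as you yourself observe, $V(\cdot,\cdot)$ is convex only in its \emph{first} argument, so Jensen applied to the average controls $V(\widehat{w},x_*)$, whereas the telescoping, the restart initialization $V_p(x_*,z_0)\le\Omega R_p^2/2$, and the stated conclusion \eqref{equatsummary} all carry $x_*$ in the first slot. The fact that relative strong monotonicity supplies both $V(w_k,x_*)$ and $V(x_*,w_k)$ does not rescue this: the orientation compatible with Jensen is precisely the one you do not need. The paper avoids averaging altogether: it sets $x_{p+1}=w_{N_p-1}$, the \emph{last} inner iterate, and lower-bounds $\sum_k L_{k+1}^{-1}V(x_*,w_k)\ge S_{N_p}\,V(x_*,w_{N_p-1})$ using the monotone decrease of the Bregman divergence along the trajectory (Lemma~\ref{bregmanlemma}); this keeps $x_*$ in the first slot throughout and is the fix you should adopt. (Strictly speaking, Lemma~\ref{bregmanlemma} states the decrease for the $z_k$ rather than the $w_k$, so even the paper's step deserves a word of justification, but the mechanism is last iterate plus monotonicity, not convexity.)

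Your two flagged secondary concerns are legitimate, and the paper glosses over both: passing from the invariant $V(x_*,x_p)\le R_p^2$ to the norm bound $\|x_*-x_p\|\le R_p$ (needed to invoke the definition of $\Omega$ for the rescaled prox-function $d_p$) is not automatic when $d$ is not strongly convex, and the preservation of the relative smoothness constant under the rescaling $d\mapsto d_p$ is asserted implicitly rather than proved.
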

\begin{proof}
В силу относительной сильной монотонности оператора:
\begin{equation}
    \mu V(x_*,z_{p+1})\leq  - \langle g(x_*),z_{p+1}-x_* \rangle - \langle g(z_{p+1}),x_*-z_{p+1}\rangle \leq \\\ - \langle g(z_{p+1}),x_*-z_{p+1} \rangle.
\end{equation}

Пусть $p=0$. Тогда после не более, чем $N_0 = \left\lceil \frac{2L\Omega}{\mu}\right\rceil$ итераций алгоритма 1, справедливы соотношения
\begin{equation}\label{ineqproof}
     -\langle g(x_*), w_{N_0-1}-x_*\rangle \leq -\frac{1}{S_{N_0}}\sum\limits_{k=0}^{N_0-1}\frac{\langle g(w_k),x_*-w_k\rangle}{L_{k+1}}\leq \frac{\Omega R_0^2}{2S_{N_0}} := R_1^2,
\end{equation}
где $w_{N_0-1} = x_1$ согласно обозначениям алгоритма 2. Предположим, что данное неравенство выполнено для некоторого $p>0$:
\begin{equation}\label{res_est}
    \frac{\mu}{S_{N_p}}\sum\limits_{k=0}^{N_p-1}\frac{V(x_*,w_k)}{L_{k+1}}\leq  -\frac{1}{S_{N_p}}\sum\limits_{k=0}^{N_p-1}\frac{\langle g(w_k),x_*-w_k\rangle}{L_{k+1}}\leq \frac{\Omega R_p^2}{2S_{N_p}}.
\end{equation}

Покажем, что неравенство \eqref{res_est} верно и для $p+1$. Действительно, в силу неравенства \eqref{eqthm1} из леммы \ref{bregmanlemma}, имеем:
$$
\sum_{k=0}^{N_{p}-1} \frac{V\left(x_{*}, w_{k}\right)}{L_{k+1}} \geqslant V\left(x_{*}, w_{N_p-1}\right)\cdot S_{N_p}.
$$
Поэтому
$$\frac{\mu}{S_{N_p}} \sum_{k=0}^{N_{p}-1} \frac{V\left(x_{*}, w_{k}\right)}{L_{k+1}} \geqslant \mu V\left(x_{*}, w_{N_{p-1}}\right),$$
откуда в силу пунктов 3 и 4 листинга алгоритма \ref{Alg:RUMP} получаем
$$
V\left(x_{*}, x_{p+1}\right) \leqslant \frac{\Omega R_{p}^{2}}{ 2\mu S_{N_{p}}} = R_{p+1}^2.
$$
Неравенство \eqref{equatsummary} теперь следует из сделанного допущения о количестве рестартов алгоритма 1 при реализации алгоритма 2.
\end{proof}

\begin{remark}
Адаптивность предложенного метода заключается в том, что для запуска алгоритма 2 не требуется информация о константе относительной гладкости $L$, которая фигурирует лишь в оценке скорости сходимости.
\end{remark}

\begin{remark}
Для корректного использования алгоритма 2 в рамках решения задачи о минимизации эмпирического риска \cite{Hendr} необходимо сделать дополнительное допущение о том, что приведенная ранее прокс-функция \eqref{prox_risk} (или, что эквивалентно, функционал центрального сервера $\overline{f}$) ограничена на единичном шаре.
\end{remark}

\section{Оценки скорости сходимости для относительно сильно выпукло-вогнутых седловых задач} \label{section_minmax}

Хорошо известно, что вариационные неравенства с монотонными операторами естественно возникают при рассмотрении выпукло-вогнутых седловых задач, важных для самых разных прикладных проблем. Поэтому в данном пункте статьи мы покажем, как полученные в предыдущих пунктах результаты о методах для вариационных неравенств можно применить к седловым задачам вида
\begin{equation}\label{eqsedlo}
f^* = \min_{u \in Q_1} \max_{v \in Q_2} f(u, v),
\end{equation}
где $f$ --- относительно сильно выпукла по $u$ и относительно сильно вогнута по $v$.

Как известно, необходимость решения вариационных неравенств мотивируется, в частности, как раз задачами вида \eqref{eqsedlo}. В качестве примера можно рассмотреть лагранжеву седловую задачу, порожденную задачей относительно сильно выпуклого программирования.  
\begin{example} Рассмотрим задачу относительно сильно выпуклого программирования (все функционалы $\widehat{f}, g_1, g_2, \ldots, g_m$ относительно сильно выпуклы):
	\begin{equation}\label{problem_with_fun_constraints}
	\left\{\begin{array}{c}
	\min_{x \in Q} \widehat{f}(x), \\
	g_{1}(x), g_{2}(x), \ldots, g_{m}(x) \leq 0.
	\end{array}\right.
	\end{equation}
	
Рассмотрим соответствующую \eqref{problem_with_fun_constraints} лагранжеву седловую задачу следующего вида
	\begin{equation}\label{lagrange_problem}
	\min_{x \in Q} \max_{ \boldsymbol{\lambda}= (\lambda_1, \ldots, \lambda_m)^T \in \mathbb{R}_+^m} L(x, \boldsymbol{\lambda}) :=  \widehat{f}(x) + \sum_{p=1}^{m} \lambda_p g_p(x) - \varepsilon \sum_{p=1}^m \lambda_{p}^2.
	\end{equation}
\end{example}

Для данного типа задач можно ввести такой аналог дивергенции Брэгмана \cite{Fedor_relative_adapuniv}:
$$
V_{\text{new}}\left((y, \boldsymbol{\lambda}), (x, \boldsymbol{\lambda}^{'})\right) = V(y,x) + \frac{1}{2} \left\|\boldsymbol{\lambda} - \boldsymbol{\lambda}^{'}\right\|_2^2, \quad  \forall y, x \in Q, \boldsymbol{\lambda},  \boldsymbol{\lambda}^{'} \in \mathbb{R}_+^m.
$$
Введённая таким образом дивергенция Брэгмана позволяет ослабить требования к ограничениям $\boldsymbol{\lambda}$. 

Приведём некоторые примеры задач оптимизации с относительно сильно выпуклыми функционалами. Начнём с примера относительно гладкого и относительно сильно выпуклого функционала.

\begin{example} (\cite{Lu_Nesterov_2018}) Пусть $\widehat{f}(x):=\frac{1}{4}\|E x\|_{2}^{4}+\frac{1}{4}\|A x-b\|_{4}^{4}+$ $\frac{1}{2}\|C x-d\|_{2}^{2}$, где $A, C$ и $E$ --- положительно определённые квадратные матрицы $n \times n$, $b,d \in \mathbb{R}^n$ --- векторы размерности $n$, $\|\cdot\|_4$ --- норма в пространстве $\ell_4$. В \cite{Lu_Nesterov_2018} показано, что $\widehat{f}$ --- относительно $L$-гладкая и относительно $\mu$-сильно выпукла для
$d(x):=\frac{1}{4}\|x\|_{2}^{4}+\frac{1}{2}\|x\|_{2}^{2}$ на множестве $Q=\mathbb{R}^{n}$, где $L=3\|E\|_{2}^{4}+3\|A\|_{2}^{4}+6\|A\|_{2}^{3}\|b\|_{2}+3\|A\|_{2}^{2}\|b\|_{2}^{2}+\|C\|_2^{2}$ и $\mu=\min \left\{\frac{\sigma_{E}^{4}}{3}, \sigma_{C}^{2}\right\}$, причём $\sigma_{E}$ и $\sigma_{C}$ --- наименьшие собственные значения матриц $E$ и $C$.
\end{example}

Теперь приведём пример относительно липшицева и относительно сильно выпуклого функционала.

\begin{example} (\cite{Zhou_NIPS_2020}) 
	Пусть
	$\widehat{f}(x) := \frac{1}{p} \|x\|_2^p$
	для $p \geq 2$ и $ Q = [-\alpha, \alpha]^n, \; \alpha > 0$. Заметим, что $\nabla \widehat{f}(x) = \|x\|_2^{p - 2} x$ и $\nabla^2 \widehat{f}(x) = \|x\|_2^{p - 2} I + (p-2)\|x\|_2^{p - 4} x x^{T}$  ($I$ --- единичная матрица). Тогда $\widehat{f}$ является относительно $M$-липшицевой при $M = 1$ относительно прокс-фyнкции $d(x) := \frac{1}{2p}\|x\|_2^{2p}$. При этом $\widehat{f}$ не сильно выпукла в обычном смысле, т.к. $\nabla^2 \widehat{f}(x) - \mu I$ не является положительно полуопределённой в окрестности 0 ни при каком $\mu >0$. Тем не менее, при 
	\begin{equation}\label{eq_mu}
	 \mu = \frac{p-1}{(2p - 1)(\sqrt{n}\alpha)^p}  
	\end{equation}
матрица $\nabla^2 \widehat{f}(x) - \mu \nabla^2 d(x)$ уже будет положительно полуопределённой, что влечёт относительную сильную выпуклость $\widehat{f}$.
	\label{ex_experiments}
\end{example}

Перейдём теперь к методике для нахождения приближённого решения задачи \eqref{eqsedlo}. Для всякого $\varepsilon > 0$ под $\varepsilon$-точным решением задачи \eqref{eqsedlo} будем понимать пару $(\widehat{u}, \widehat{v})$ такую, что $$\max_{v \in Q_2} f(\widehat{u}, v) - \min_{u \in Q_1} f(u, \widehat{v}) \leq \varepsilon.$$ Обозначим $x = (u, v), y = (z, t)$, а также введем оператор 
\begin{equation}\label{operator-sedlo}
g(x) := \Bigg( 
\begin{aligned}
f^{'}_{u}(u,v)\\
-f^{'}_{v}(u,v)
\end{aligned}
\Bigg).
\end{equation}
Тогда ввиду выпукло-вогнутости $f$ имеем: 
\begin{equation}
\begin{aligned}
\langle g(x), x - y \rangle &=
 \Bigg( 
\begin{aligned}
f^{'}_{u}(u,v)\\
-f^{'}_{v}(u,v)
\end{aligned}
\Bigg)
 (u - z, v - t)  = \langle f^{'}_{u}(u,v), u - z \rangle - \langle f^{'}_{v}(u,v), v - t \rangle \geq \\&
 \geq f(u, v) - f(z, v) 
- f(u, v)+ f(u, t)=  f(u,t) - f(z, v).
\end{aligned}
\end{equation}

Будем предполагать относительную ограниченность оператора \eqref{operator-sedlo}. Тогда метод \eqref{eq:4} для задач \eqref{eqsedlo} приводит к оценкам вида:
\begin{equation} \label{eq:21}
\sum_{k=1}^{N} \frac{2k}{N(N+1)} \langle g(x_k), x_k -x\rangle \leq \frac{2 M^2}{\mu (N+1)}.
\end{equation}
Если $x_k = (u_k, v_k), \;\; x = (u, v)$, то  
\begin{equation}
\langle g(x_k), x_k -x\rangle \geq f(u_k,v) - f(u, v_k) \quad \forall (u, v).
\end{equation}
Далее, \eqref{eq:21} означает, что 
\begin{equation}
\sum_{k=1}^{N} \frac{2k}{N(N+1)} (f(u_k,v) - f(u, v_k)) \leq \frac{2M^2}{\mu (N+1)}.
\end{equation}
Положим
\begin{equation}
(\widehat{u}, \widehat{v}) := \frac{1}{N(N+1)} \sum_{k=1}^{N} 2k (u_k,v_k).
\end{equation}
Тогда получаем, что
\begin{equation}
\sum_{k=1}^{N} \frac{2k}{N(N+1)} (f(u_k, v) - f(u, v_k)) \geq f(\widehat{u}, v) - f(u, \widehat{v}), 
\end{equation}
откуда ввиду \eqref{eq:2} получаем для задачи \eqref{eqsedlo} следующую оценку
\begin{equation}
\max_{v} f(\widehat{u}, v) - \min_{u} f(u, \widehat{v}) \leq \frac{2M^2}{\mu (N+1)}.
\end{equation}

\begin{remark}
Оценка теоремы \ref{th_restarts} применима к сильно выпукло-вогнутой седловой задаче \eqref{eqsedlo} в случае, если порождённый ей оператор \eqref{operator-sedlo} удовлетворяет условию относительной гладкости \eqref{rel_smooth}.
\end{remark}

\if 0
\begin{remark}
Предположим, что оператор \eqref{operator-sedlo} является относительно гладким \eqref{rel_smooth} и обозначим $(\widehat{u}, \widehat{v}) :=  \sum_{k=1}^{N-1} \frac{1}{L_{k+1}} (u_k,v_k)$. Как было показано ранее, техника, описанная в пункте \ref{sect_UMP} и соответствующие оценки типа \eqref{res_est} могут гарантировать следующую оценку для решения задачи \eqref{eqsedlo}:
\begin{equation}
\max_{v} f(\widehat{u}, v) - \min_{u} f(u, \widehat{v}) \leq \frac{\Omega R_{0}^{2}}{ 2\mu N}.
\end{equation}
\end{remark}
\fi

\section{Заключение}
В данной статье было введено понятие относительной сильной монотонности операторов вариационного неравенства и рассмотрены два типа предположений о непрерывности (гладкости) операторов: относительная ограниченность и относительная гладкость. Для решения вариационных неравенств указанных типов были исследованы следующие численные методы градиентного типа: вариант субградиентного метода (зеркального спуска), а также рестартованный адаптивный проксимальный метод. Стоит отметить, что субградиентный метод \eqref{eq:4} оптимален на классе вариационных неравенств с относительно сильно монотонными и относительно ограниченными операторами, как и на классе относительно сильно выпуклых и относительной липшицевых задач минимизации \cite{Lu_2018,Zhou_NIPS_2020}. В то же время для рестартованного варианта адаптивного проксимального метода (алгоритм \ref{Alg:RUMP}) доказывается линейная скорость сходимости на классе вариационных неравенств с относительно сильно монотонными и относительно гладкими операторами. Также были проведены численные эксперименты, демонстрирующие преимущества в использовании предложенных методов. В частности, скорость сходимости метода \eqref{eq:4} численно сравнивалась с недавно предложенным адаптивным алгоритмом (алгоритм 2 из  \cite{Fedor_relative_adapuniv}). Эксперименты проводились для оператора $g = \nabla F$, где относительно липшицева и относительно сильно выпуклая функция $F$ выбрана согласно примеру \ref{ex_experiments} (более подробное описание можно найти в \cite{Zhou_NIPS_2020}), результаты экспериментов приведены в рисунке \ref{fig_numerical}. Анализируя полученные результаты, стоит отметить, что учёт относительной сильной монотонности с параметром \eqref{eq_mu} может привести  к тому, что  предложенный в настоящей работе алгоритм \eqref{eq:4} на практике работает лучше, чем адаптивный метод \cite{Fedor_relative_adapuniv}, не учитывающий свойство относительной сильной монотонности оператора $g$.

\begin{figure}[htp]
\begin{center}
	\minipage{0.50\textwidth}
	\includegraphics[width=\linewidth]{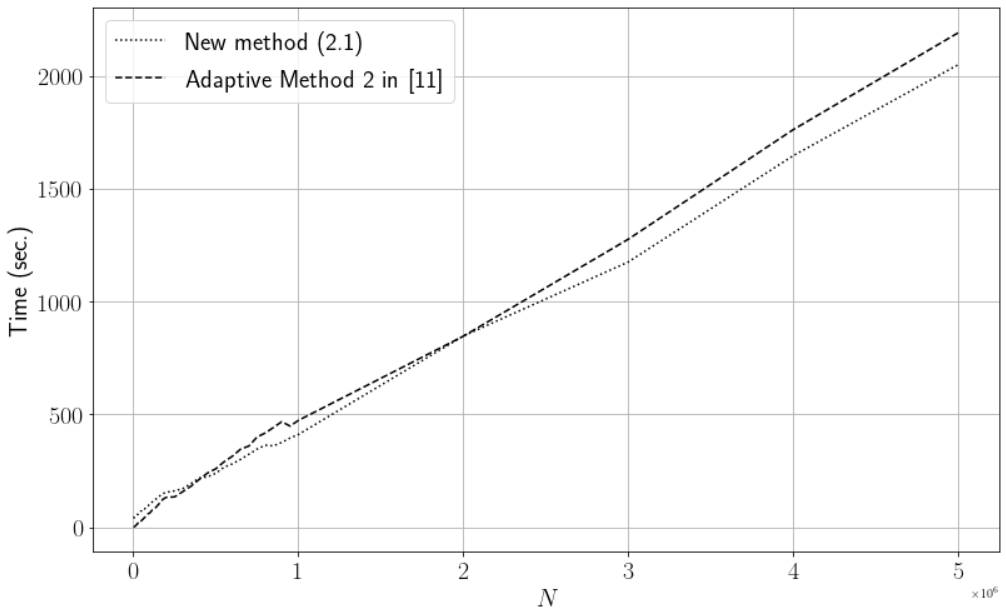}
	\endminipage
	\minipage{0.50\textwidth}
	\includegraphics[width=\linewidth]{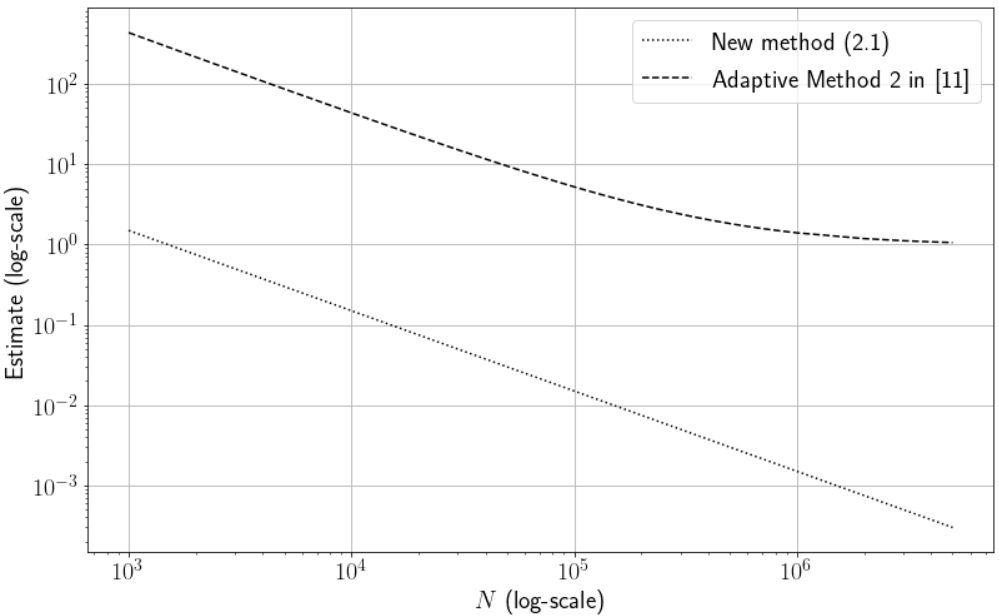}
	\endminipage
\end{center}
\caption{Результаты сравнения для $n=1000, \alpha =0.5, p = 2.$}
\label{fig_numerical}
\end{figure}

\end{document}